\newcommand{\ad}{\mathrm{ad}}
\newcommand{\adstar}{\ad^*}
\newcommand{\Ad}{\mathrm{Ad}}
\newcommand{\Adstar}{\Ad^*}
\newcommand{\Jac}{\mathrm{Jac}}
\newcommand{\transpose}{\dagger}
\newcommand{\Laplacian}{\Delta}
\newcommand{\diver}{\mathrm{div}}
\newcommand{\Diff}{\mathcal{D}}
\newcommand{\Lie}{\mathcal{L}}
\newtheorem{thm}{Theorem}[section]
\newtheorem{prop}[thm]{Proposition}
\newtheorem{lem}[thm]{Lemma}
\newtheorem*{definition}{Definition}
\newtheorem{example}[thm]{Example}
\title{Geometry of the Contactomorphism group}
\author{Boramey Chhay and Stephen C. Preston}
\address{University of Colorado}
\email{boramey.chhay@colorado.edu}
\email{stephen.preston@colorado.edu}
\date{\today}
\begin{document}

\begin{abstract}
In this paper we examine the Riemannian geometry of the group of contactomorphisms of a compact contact manifold.
We compute the sectional curvature of $\mathcal{D}_\theta(M)$ in the sections containing the Reeb field and show
that it is non-negative. We also solve explicitly the Jacobi equation along the geodesic corresponding to the flow
of the Reeb field and determine the conjugate points. Finally, we show that the Riemannian exponential map is a
non-linear Fredholm map of index zero.
\end{abstract}

\maketitle

\section{Introduction}
Let $M$ be an orientable compact manifold (without boundary) of odd dimension $2n+1$. Recall that $M$ is called a
\emph{contact manifold} if there is a $1$-form $\theta$ on $M$ satisfying
the non-degeneracy condition that $\theta\wedge d\theta^n\neq0$ everywhere. If $\mathcal{D}(M)$ denotes the group of diffeomorphisms of $M$,
we say that $\eta\in\mathcal{D}(M)$ is a contactomorphism if $\eta^*\theta$ is some positive functional multiple of $\theta$;
the group of contactomorphisms is denoted by $\mathcal{D}_{\theta}(M)$.
Keeping track of this multiple we may write $\eta^*\theta=e^\Sigma \theta$ where $\Sigma$ is some function $\Sigma \colon M\rightarrow \mathbb{R}$,
and we define the group of ``padded contactomorphisms'' to be the group
$$ \widetilde{\mathcal{D}}_{\theta}(M) = \{ (\eta, \Sigma) \, \vert \, \eta^*\theta = e^{\Sigma}\theta\}.$$
For details on these constructions, see \cite{EP}.

We will be working primarily on the Lie algebras of these groups, and we will use the following well-known
fact that the Lie algebra  $T_e\mathcal{D}_{\theta}(M)$ can be identified with the space of functions $f\colon M\to \mathbb{R}$.

\begin{prop}[EP] The Lie algebra $T_e\mathcal{D}_\theta(M)$ consists
of vector fields $u$ such that $\mathcal{L}_u\theta=\lambda \theta$ for some function
$\lambda:M\rightarrow \mathbb{R}$. Any such field is uniquely determined by the function
$f=\theta(u)$, and we write $u=S_\theta f$. Thus we have that
\[T_e\mathcal{D}_\theta(M)=\{S_\theta f:f\in C^\infty(M)\}.\]
\end{prop}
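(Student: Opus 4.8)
The plan is to first identify $T_e\mathcal{D}_\theta(M)$ with the space of infinitesimal contactomorphisms, and then to use Cartan's formula together with the Reeb splitting of $TM$ to parametrize that space by $C^\infty(M)$. For the identification, recall that $\mathcal{D}_\theta(M)$ is a Lie subgroup of $\mathcal{D}(M)$ (in the ILH/tame-Fréchet sense of \cite{EP}), so $u\in T_e\mathcal{D}_\theta(M)$ exactly when $u=\frac{d}{dt}\big|_{t=0}\eta_t$ for some smooth curve $t\mapsto\eta_t\in\mathcal{D}_\theta(M)$ with $\eta_0=e$. Writing $\eta_t^*\theta=e^{\Sigma_t}\theta$ and differentiating at $t=0$ gives $\mathcal{L}_u\theta=\lambda\theta$ with $\lambda=\frac{d}{dt}\big|_{t=0}\Sigma_t$. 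Conversely, if $\mathcal{L}_u\theta=\lambda\theta$ then the flow $\eta_t$ of $u$ satisfies $\frac{d}{dt}\eta_t^*\theta=\eta_t^*(\mathcal{L}_u\theta)=(\lambda\circ\eta_t)\,\eta_t^*\theta$, which at each point is a scalar linear ODE whose solution $\eta_t^*\theta=\exp\!\big(\int_0^t\lambda\circ\eta_s\,ds\big)\theta$ is a positive multiple of $\theta$; hence $\eta_t\in\mathcal{D}_\theta(M)$, so $u\in T_e\mathcal{D}_\theta(M)$. This gives the description $T_e\mathcal{D}_\theta(M)=\{u:\mathcal{L}_u\theta=\lambda\theta\text{ for some }\lambda\in C^\infty(M)\}$.

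For the parametrization, let $R$ be the Reeb field (the unique field with $\theta(R)=1$ and $\iota_R\,d\theta=0$) and $\xi=\ker\theta$ the contact distribution, so $TM=\mathbb{R}R\oplus\xi$ pointwise. Given $u$ with $\mathcal{L}_u\theta=\lambda\theta$, put $f=\theta(u)$; Cartan's magic formula yields $\mathcal{L}_u\theta=\iota_u\,d\theta+d(\iota_u\theta)=\iota_u\,d\theta+df$, hence $\iota_u\,d\theta=\lambda\theta-df$. Contracting with $R$ and using $d\theta(u,R)=-(\iota_R\,d\theta)(u)=0$ forces $\lambda=R(f)$, so $\lambda$ is determined by $f$ and $\iota_u\,d\theta=R(f)\,\theta-df=:\alpha_f$, a $1$-form built entirely from $f$, which satisfies $\alpha_f(R)=R(f)-R(f)=0$. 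Writing $u=fR+w$ with $w\in\xi$ (legitimate since $\theta(u)=f$) and using $\iota_{fR}\,d\theta=f\,\iota_R\,d\theta=0$ reduces the equation to $\iota_w\,d\theta=\alpha_f$.

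It remains to solve this uniquely for $w$. The bundle map $\xi\to T^*M$, $w\mapsto\iota_w\,d\theta$, lands in the annihilator of $R$ (since $(\iota_w\,d\theta)(R)=-(\iota_R\,d\theta)(w)=0$), a sub-bundle of rank $2n$; its kernel at a point is $\{w\in\xi:d\theta(w,\cdot)=0\}$, which is trivial because $d\theta$ restricted to $\xi$ is non-degenerate — this is exactly the contact condition $\theta\wedge d\theta^n\ne0$ — while $d\theta(w,R)=0$ holds automatically for $w\in\xi$. A rank count then makes $w\mapsto\iota_w\,d\theta$ a bundle isomorphism from $\xi$ onto the annihilator of $R$, so $w$ is the (smooth) preimage of $\alpha_f$ and $u=fR+w$ is uniquely recovered from $f$; we set $S_\theta f:=fR+w$. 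Running the computation backwards, for any $f\in C^\infty(M)$ the field $u=S_\theta f$ satisfies $\iota_u\,d\theta+df=\alpha_f+df=R(f)\,\theta$, i.e.\ $\mathcal{L}_u\theta=R(f)\,\theta$, so $S_\theta f$ does lie in $T_e\mathcal{D}_\theta(M)$. The only step needing genuine input, rather than bookkeeping with Cartan's formula and the Reeb splitting, is this last isomorphism, where non-degeneracy of the contact form is essential.
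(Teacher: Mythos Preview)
Your argument is correct and is the standard derivation of the contact vector field $S_\theta f$ from its contact Hamiltonian $f$: Cartan's formula plus the Reeb splitting $TM=\mathbb{R}E\oplus\ker\theta$, followed by the observation that $w\mapsto\iota_w\,d\theta$ is a bundle isomorphism from $\ker\theta$ onto the annihilator of $E$ by the nondegeneracy condition $\theta\wedge d\theta^n\neq 0$. The identification of $T_e\mathcal{D}_\theta(M)$ with infinitesimal contactomorphisms via differentiating curves and integrating back is also handled cleanly.

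There is nothing to compare against in this paper, however: the proposition is quoted from \cite{EP} and stated without proof here. Your write-up is essentially the argument one finds in that reference (and in standard contact-geometry texts such as Blair~\cite{B}), so it is consistent with what the authors are citing. One cosmetic point: the paper denotes the Reeb field by $E$ rather than $R$, so if this is to be spliced in you should match that notation.
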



Here we call $S_\theta$ the contact operator. The Lie bracket on $T_e\mathcal{D}_\theta(M)$ is given by
\begin{equation}\label{contactbracket}
[S_\theta f,S_\theta g]=S_\theta \{f,g\},\text{    where   }\{f,g\}=S_\theta f(g)-gE(f);
\end{equation}
here $E$ denotes the Reeb vector field, uniquely specified by the conditions $\theta(E)=1$, $\iota_Ed\theta=0$.
We call $\{\cdot,\cdot\}$ the ``contact Poisson bracket''; it is not a true Poisson bracket
since it does not satisfy Leibniz's rule. 

We also need a Riemannian structure on $(M,\theta)$, and we will require
that the Riemannian metric be \emph{associated} to the contact form. It will also be convenient to assume that $E$ is a Killing field (i.e., its flow consists of isometries).

\begin{definition}
If $(M,\theta)$ is a contact manifold and $E$ is the Reeb field, a Riemannian metric $(\cdot,\cdot)_g$ is \emph{associated}
if it satisfies the following conditions:
\begin{enumerate}
\item $\theta(u)=(u,E)_g$ for all $u\in TM$, and
\item there exists a $(1,1)$-tensor field $\phi$ such that $\phi^2(u)=-u+\theta(u)E$
and $d\theta(u,v)=(u,\phi v)_g$ for all $u$ and $v$.
\end{enumerate}
If in addition $E$ is a Killing field, we say that that $(M,\theta,g)$ is \emph{$K$-contact}.
\end{definition}

Now if we have a $K$-contact manifold $(M,\theta)$ with an associated metric $(\cdot,\cdot)_g$,
we define a right-invariant metric $\langle\cdot,\cdot\rangle$, on $\mathcal{D}_\theta(M)$  by
\[\langle S_\theta f,S_\theta g\rangle=\int_M(S_\theta f,S_\theta g)_g + E(f)E(g) \, d\mu=\int_M(f-\Delta f)gd\mu,\]
where the latter formula applies since the metric is associated~\cite{EP}. This is the natural metric
induced on $\widetilde{\mathcal{D}}_{\theta}(M)$ as a submanifold of the semidirect product
$\mathcal{D}(M) \ltimes C^{\infty}(M,\mathbb{R})$.


On any Lie group with a right-invariant Riemannian metric, the geodesic equation can
be written in terms of the flow equation \[\frac{d\eta}{dt} = u\circ \eta\] and
the Euler-Arnold equation, given by
\[\frac{du}{dt}+\adstar_uu=0.\]
For $\mathcal{D}_\theta(M)$ with an associated Riemannian metric, the Euler-Arnold equation is given by
\begin{equation}\label{contacto}
(f-\Delta f)_t+S_\theta f(f-\Delta f)+(n+2)(f-\Delta f)E(f)=0.
\end{equation}

\begin{example}
\normalfont
In the case of the circle $M=S^1$ with coordinate $\alpha$, the standard $1$-form is $\theta=d\alpha$, and every diffeomorphism
is a contactomorphism.
The Reeb field is given by $\frac{\partial}{\partial \alpha}$, and the contact operator is given by $S_\theta f=fE$.
Hence the Euler-Arnold equation is given by
\[(f-f_{\alpha\alpha})_t+f(f_\alpha-f_{\alpha\alpha\alpha}) +2(f-f_{\alpha\alpha})f_\alpha=0,\]
which is the Camassa-Holm equation~\cite{CH, K, KLMP}.
\end{example}
Here we prove three results. First, we demonstrate that the flow of the Reeb field is a geodesic, and we show that the
sectional curvature is non-negative in all sections containing the Reeb field.
It is then natural to ask whether there are conjugate points along the corresponding Reeb flow geodesic. We compute the
Jacobi fields along this geodesic explicitly and find all such conjugate points. Having obtained conjugate points, it is
natural to ask whether such points must be isolated and of finite order; we prove the answer is affirmative by showing
that the differential of the exponential map is Fredholm. For simplicity of exposition we demonstrate only ``weak''
Fredholmness, though we show how one can use the technique of \cite{MP} to demonstrate strong Fredholmness in the context
of Sobolev manifolds.
\\
\\
\textbf{Acknowledgements.} The authors gratefully acknowledge the support of Simons Foundation Collaboration Grant $\#$318969.

\section{Sectional Curvature}

The curvature of a Lie group $G$ with right-invariant metric
in the section determined by a pair of vectors $X,Y$ in the Lie algebra $\mathfrak{g}$ is given by the following formula~\cite{AK}.

\begin{equation}\label{arnoldformula}
C(X,Y)=\langle d,d\rangle+2\langle a,b \rangle-3\langle a,a\rangle-4\langle B_X,B_Y\rangle
\end{equation} where
\begin{multline*}
2d=B(X,Y)+B(Y,X), \qquad 2b=B(X,Y)-B(Y,X), \\
2a=\ad_XY,\qquad 2B_X=B(X,X),\qquad 2B_Y=B(Y,Y),
\end{multline*}
where $B$ is the
bilinear operator on $\mathfrak{g}$ given by the relation $\langle B(X,Y),Z\rangle=\langle X,\ad_YZ\rangle$, i.e.,
$B(X,Y) = \adstar_YX$.
Note that in terms of the usual Lie bracket of vector fields, we have $\ad_XY = -[X,Y]$; see \cite{AK}.
The sectional curvature is given by the normalization $K(X,Y) = C(X,Y)/\lvert X\wedge Y\rvert^2$, but since we only
care about the sign, we will work with $C$ instead of $K$.

In this section we will show that the curvature takes on both signs; in fact we will show that $C(E,Y)\ge 0$
for all $Y$, and that there are many sections such that $C(X,Y)< 0$.

\begin{lem}\label{adstarlemma}
If $X = S_{\theta}f$ and $Y = S_{\theta}g$, then
\begin{equation}\label{adstar}
\adstar_XY = S_{\theta}(1-\Laplacian)^{-1}\big[ S_{\theta}f(g-\Laplacian g) + (n+2) E(f) (g-\Laplacian g)\big].
\end{equation}
\end{lem}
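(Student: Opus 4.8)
The plan is to pin down $\adstar_{S_\theta f}(S_\theta g)$ by pairing it with an arbitrary test field $S_\theta h$ and using weak nondegeneracy of the inner product. Combining the definition of $\adstar$ (recall $\ad_X Z=-[X,Z]$), the bracket formula \eqref{contactbracket}, and the associated-metric expression $\langle S_\theta a,S_\theta b\rangle=\int_M (a-\Delta a)\,b\, d\mu$, one gets
\[
\langle\adstar_{S_\theta f}(S_\theta g),S_\theta h\rangle
=\langle S_\theta g,\ad_{S_\theta f}(S_\theta h)\rangle
=-\langle S_\theta g,S_\theta\{f,h\}\rangle
=-\int_M (g-\Delta g)\,\{f,h\}\, d\mu .
\]
Writing $\{f,h\}=S_\theta f(h)-h\,E(f)$ splits the integral in two, and the only term that needs attention is $\int_M (g-\Delta g)\,S_\theta f(h)\, d\mu$, where the derivative lands on $h$.

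To move that derivative off $h$, I would integrate by parts against the Riemannian volume form, using $\int_M u(\psi)\, d\mu=-\int_M \psi\,\diver(u)\, d\mu$ (valid because $M$ is closed). The heart of the argument is then the identity $\diver(S_\theta f)=(n+1)\,E(f)$. This follows from Cartan's formula: since $\theta(S_\theta f)=f$ and $\iota_E d\theta=0$, we have $\Lie_{S_\theta f}\theta=df+\iota_{S_\theta f}d\theta$, and evaluating both sides on $E$ identifies the proportionality factor, giving $\Lie_{S_\theta f}\theta=E(f)\,\theta$ and hence $\Lie_{S_\theta f}d\theta=d(E(f))\wedge\theta+E(f)\,d\theta$. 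As the metric is associated, the Riemannian volume form is a constant multiple of $\theta\wedge(d\theta)^n$, and a short wedge computation — in which the term $\theta\wedge d(E(f))\wedge\theta$ vanishes — yields $\Lie_{S_\theta f}\big(\theta\wedge(d\theta)^n\big)=(n+1)E(f)\,\theta\wedge(d\theta)^n$, i.e.\ the claimed divergence.

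With this in hand, integration by parts gives
\[
\int_M (g-\Delta g)\,S_\theta f(h)\, d\mu
=-\int_M h\,S_\theta f(g-\Delta g)\, d\mu-(n+1)\int_M h\,(g-\Delta g)\,E(f)\, d\mu ,
\]
and substituting back while combining the two $E(f)$ contributions produces
\[
\langle\adstar_{S_\theta f}(S_\theta g),S_\theta h\rangle
=\int_M \big[\,S_\theta f(g-\Delta g)+(n+2)\,E(f)(g-\Delta g)\,\big]\,h\, d\mu .
\]
Finally, writing the claimed right-hand side of \eqref{adstar} as $S_\theta F$ with $F=(1-\Delta)^{-1}\big[S_\theta f(g-\Delta g)+(n+2)E(f)(g-\Delta g)\big]$ and using self-adjointness of $1-\Delta$, one has $\langle S_\theta F,S_\theta h\rangle=\int_M (1-\Delta)F\cdot h\, d\mu$, which equals the expression just obtained; since this holds for all $h\in C^\infty(M)$ and the pairing is weakly nondegenerate, \eqref{adstar} follows. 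The one place to be careful is the tally of constants: the coefficient of $E(f)$ ends up being $n+2$, not $n+1$, the extra unit coming from the $-h\,E(f)$ term already present in the contact Poisson bracket.
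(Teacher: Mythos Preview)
Your proof is correct and follows essentially the same route as the paper's: pair with an arbitrary $S_\theta h$, expand the contact bracket, integrate by parts using $\diver(S_\theta f)=(n+1)E(f)$, and then invert $(1-\Delta)$. The only difference is that you sketch a derivation of the divergence identity via Cartan's formula and the volume form $\theta\wedge(d\theta)^n$, whereas the paper simply cites it from \cite{EP}.
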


\begin{proof}
Let $Z = S_{\theta}h$ for some function $h$. Then we have
\begin{align*}
\langle \adstar_XY, Z\rangle &= \langle Y, \ad_XZ\rangle = -\int_M \langle S_{\theta}g, S_{\theta}\{f,h\} \rangle \, d\mu \\
&= -\int_M (g-\Laplacian g) \{f,h\} \, d\mu = -\int_M (g-\Laplacian g) \big( S_{\theta}f(h) - hE(f)\big) \, d\mu \\
&= \int_M h \big[ S_{\theta}f (g-\Laplacian g) + (g-\Laplacian g) \big(\diver(S_{\theta}f) + E(f)\big)\big] \, d\mu.
\end{align*}
Now using the fact from \cite{EP} that $\diver{(S_\theta}f) = (n+1) E(f)$ for an associated metric, we obtain
$$ \langle \adstar_XY,Z\rangle = \int_M h\big[ S_{\theta}f(g-\Laplacian g) + (n+2) E(f) (g-\Laplacian g)\big] \, d\mu.$$
Since this is true for every $h$, we conclude formula \eqref{adstar}.
\end{proof}

Combining Lemma \ref{adstarlemma} with the general formula \eqref{arnoldformula}, we obtain the following formula.

\begin{thm}
Suppose $M$ is a contact manifold with associated Riemannian metric and a Killing Reeb field. Then the sectional curvature of $\mathcal{D}_\theta(M)$ is non-negative when one of the directions is the Reeb Field.
\end{thm}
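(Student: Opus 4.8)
The plan is to specialize Arnold's curvature formula \eqref{arnoldformula} to the case $X = E = S_\theta 1$ (the Reeb field corresponds to the constant function $f \equiv 1$, since $\theta(E) = 1$) and $Y = S_\theta g$ for an arbitrary function $g$, and then show the resulting expression is a sum of manifestly non-negative terms. The first step is to identify the simplifications that occur when $f \equiv 1$: we have $E(f) = E(1) = 0$, and $S_\theta 1 = E$. Plugging $f \equiv 1$ into Lemma \ref{adstarlemma}, the term $(n+2)E(f)(g - \Delta g)$ drops out entirely, so $\adstar_E Y = S_\theta (1-\Delta)^{-1}[E(g - \Delta g)]$. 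Since $E$ is Killing, it commutes with $\Delta$, so $E(g - \Delta g) = (1-\Delta) E(g)$, giving the clean formula $\adstar_E Y = S_\theta E(g)$. Similarly $\adstar_Y E = S_\theta (1-\Delta)^{-1}[S_\theta g(1 - \Delta 1) + (n+2) E(g)(1 - \Delta 1)] = S_\theta(1-\Delta)^{-1}[(n+2)E(g)] $, and again using $[E,\Delta]=0$ on the image, one should be able to rewrite this; in any case these are the two values $B(E,Y) = \adstar_Y E$ and $B(Y,E) = \adstar_E Y$ needed for the formula.

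Next I would compute the five ingredients $d, b, a, B_X, B_Y$ of \eqref{arnoldformula} with $X = E$. We get $2a = \ad_E Y = -[E, S_\theta g] = -S_\theta\{1,g\} = -S_\theta(E(g))$ by \eqref{contactbracket} (since $\{1,g\} = S_\theta 1 (g) - g E(1) = E(g)$); so $a = -\tfrac12 S_\theta E(g)$. Also $2B_X = B(E,E) = \adstar_E E = S_\theta E(1) = 0$, so the last term $-4\langle B_E, B_Y\rangle$ vanishes outright — this is where the Reeb direction is special. The terms $2d = B(E,Y) + B(Y,E)$ and $2b = B(E,Y) - B(Y,E)$ then combine the two $\adstar$ computations above. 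The key structural observation I expect to exploit is that both $a$ and the antisymmetric part $b$ are proportional to $S_\theta E(g)$, while $d$ and $B_Y$ involve other combinations; after substituting, the formula $C(E,Y) = \langle d,d\rangle + 2\langle a,b\rangle - 3\langle a,a\rangle - 4\langle B_E,B_Y\rangle$ should collapse to something like $\langle d,d\rangle$ plus a non-negative multiple of $\|S_\theta E(g)\|^2$, or be rewritable as a sum of squares.

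The main obstacle I anticipate is controlling the cross term $2\langle a, b\rangle - 3\langle a,a\rangle$ and showing that whatever is left after cancellation is genuinely non-negative rather than merely of indefinite sign — in Arnold-type curvature computations the $-3\langle a,a\rangle$ term is the one that usually drives curvature negative, so the crux is verifying that here it is dominated by $2\langle a,b\rangle$ and $\langle d,d\rangle$. Concretely, I would write $d = \tfrac12(B(E,Y) + B(Y,E))$ and $b = \tfrac12(B(E,Y) - B(Y,E))$ so that $\langle d,d\rangle - \langle b,b\rangle = \langle B(E,Y), B(Y,E)\rangle$, and then try to reorganize $C(E,Y)$ into $\langle B(E,Y), B(Y,E)\rangle + \langle b,b\rangle + 2\langle a,b\rangle - 3\langle a,a\rangle$; if $b = \pm a$ up to a positive scalar (which the computations above suggest, since both are multiples of $S_\theta E(g)$), the terms $\langle b,b\rangle + 2\langle a,b\rangle - 3\langle a,a\rangle$ reduce to an explicit scalar multiple of $\|S_\theta E(g)\|^2$, and one checks the sign of that scalar together with the sign of $\langle B(E,Y), B(Y,E)\rangle$. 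Establishing that last inner product is non-negative — likely by expressing $B(E,Y) = S_\theta E(g)$ and $B(Y,E) = (n+2) S_\theta (1-\Delta)^{-1} E(g)$ and using that $(1-\Delta)^{-1}$ is a positive self-adjoint operator with respect to the $L^2$ pairing, together with the fact that $E$ is skew-adjoint (being Killing and divergence-free) so that $\langle S_\theta E(g), S_\theta (1-\Delta)^{-1}E(g)\rangle = \int_M (E(g))\,(1-\Delta)(1-\Delta)^{-1}E(g)\,d\mu = \int_M (E(g))^2\,d\mu \ge 0$ — is the heart of the argument, and once it is in hand the non-negativity of $C(E,Y)$ follows by collecting the non-negative pieces.
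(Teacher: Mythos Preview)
Your approach is exactly the paper's: specialize Arnold's formula to $X=E=S_\theta 1$, compute $B(X,Y)=\adstar_YX=(n+2)S_\theta(1-\Delta)^{-1}E(g)$, $B(Y,X)=\adstar_XY=S_\theta E(g)$ (using $[E,\Delta]=0$), $a=-\tfrac12 S_\theta E(g)$, and $B_X=0$, then substitute. Your ingredients are all correct.

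The only issue is your hedging in the last paragraph. Your guess that $b\propto a$ is wrong: $b=\tfrac12\big((n+2)S_\theta(1-\Delta)^{-1}E(g)-S_\theta E(g)\big)$ is not a multiple of $S_\theta E(g)$ in general, since $(1-\Delta)^{-1}$ does not act as a scalar. But this guess is also unnecessary. If you simply set $p=S_\theta E(g)$ and $q=(n+2)S_\theta(1-\Delta)^{-1}E(g)$ and plug $d=\tfrac12(p+q)$, $b=\tfrac12(q-p)$, $a=-\tfrac12 p$, $B_X=0$ into \eqref{arnoldformula}, the $\lVert p\rVert^2$ and $\langle p,q\rangle$ terms cancel \emph{exactly}, leaving
\[
C(E,S_\theta g)=\tfrac14\lVert q\rVert^2=\tfrac{(n+2)^2}{4}\,\lVert S_\theta(1-\Delta)^{-1}E(g)\rVert^2\ge 0,
\]
which is the paper's formula. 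There is no delicate sign analysis or inequality needed; the cross term $\langle B(E,Y),B(Y,E)\rangle$ you worried about cancels rather than merely being non-negative.
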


\begin{proof}
Write $X = S_{\theta}(1) = E$ and $Y = S_{\theta}g$ for some function $g$. Then we have
$\ad_XY = -[X,Y] = -S_{\theta}\{f,g\} = -S_{\theta}(E(g))$ using \eqref{contactbracket}.

Furthermore, we find that
\begin{align*}
B(X,Y) &= \adstar_YX = S_{\theta}(1-\Laplacian)^{-1}\big[ (n+2) E(g)\big], \\
B(Y,X) &= \adstar_XY = S_{\theta}(1-\Laplacian)^{-1}\big[ E(g-\Laplacian g)\big],
\end{align*}
and we conclude that $B(X,X)=0$.
%
%
%
%
%
%
%
%
Note that
there is no need to calculate $B(Y,Y)$ as it only appears in the curvature formula
coupled with $B(X,X)=0$.


Formula \eqref{arnoldformula} yields
\begin{align*}
C(X,Y) &=
\tfrac{1}{4} \langle S_\theta E(g)+S_\theta (1-\Laplacian)^{-1}[(n+2)E(g)],S_\theta E(g)+S_\theta (1-\Laplacian)^{-1}[(n+2)E(g)]\rangle\\
&\qquad\quad+\tfrac{1}{2} \langle S_\theta E(g), S_\theta E(g)-S_\theta (1-\Laplacian)^{-1}[(n+2)E(g)]\rangle-\tfrac{3}{4}
\langle S_\theta E(g),S_\theta E(g)\rangle,
\end{align*}
and thus we have that
\[C(X,Y)=\tfrac{(n+2)^2}{4} \lvert  S_\theta (1-\Laplacian)^{-1}[E(g)] \rvert^2.\]
In particular we have that $K(X,Y)$ is non-negative.
\end{proof}

Observe that the sectional curvature $K(E,S_{\theta}g)$ is zero if and only if $E(g)\equiv 0$. If this is the case, $S_{\theta}g$
actually preserves the contact form (not just the contact structure); that is, if $\eta$ is the flow of $S_{\theta}g$ then
$\eta^*\theta = \theta$, and $\eta$ is called a quantomorphism.

It would be interesting to determine whether there are any other velocities $X\in T_e\Diff_{\theta}(M)$
for which $C(X,Y)\ge 0$ for all $Y$. On the volumorphism group $\Diff_{\mu}(M)$ of a manifold $M$ of dimension
two or higher, for example, this is only true when $X$ is a  Killing field; see \cite{KLMP}.

To demonstrate negative curvature, it is sufficient to work on the quantomorphism group, using the following
result from \cite{EP}.

\begin{thm}[EP]
If $M$ is a contact manifold with a regular Reeb field $E$ for which the orbits are all closed and of the same length,
then the group of quantomorphisms $\Diff_q(M)$ consisting of those contactomorphisms $\eta$ such that
$\eta^*\theta=\theta$ is a closed and totally geodesic submanifold.
\end{thm}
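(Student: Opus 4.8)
The plan is to verify two things: first, that $\Diff_q(M)$ is a submanifold (in fact a subgroup, which is clear since $\eta^*\theta = \theta$ and $\zeta^*\theta=\theta$ imply $(\eta\circ\zeta)^*\theta=\theta$), and second, that geodesics of $\mathcal{D}_\theta(M)$ which start tangent to $\Diff_q(M)$ stay in it. For the submanifold claim I would identify the Lie algebra $T_e\Diff_q(M)$: since $\mathcal{L}_{S_\theta f}\theta = (E f)\,\theta$ by the computation underlying Proposition [EP] (recall $\mathcal{L}_{S_\theta f}\theta = \lambda\theta$ with $\lambda = E(f)$), the condition $\eta^*\theta=\theta$ linearizes to $E(f)=0$. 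Thus $T_e\Diff_q(M) = \{S_\theta f : E(f)=0\}$, and the regularity/closed-orbits hypothesis on $E$ is exactly what guarantees that the space of functions constant along the Reeb orbits is a genuine (closed, co-dimension-one-in-the-appropriate-sense) subspace, so that $\Diff_q(M)$ is a smooth closed submanifold and not merely an immersed one.

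For the totally geodesic property, the cleanest route is via the Euler--Arnold equation \eqref{contacto}. A geodesic through the identity is determined by its initial velocity $f_0$, and I would show that if $E(f_0)=0$ then the solution $f(t)$ of \eqref{contacto} satisfies $E(f(t))=0$ for all $t$. Applying $E$ to \eqref{contacto} and using that $E$ is Killing (so $E$ commutes with $\Delta$, hence with $(1-\Delta)$ and $(1-\Delta)^{-1}$) and that $[E,S_\theta f] = S_\theta(E(f))$ by \eqref{contactbracket}, one finds that $w := E(f)$ satisfies a linear evolution equation of the form $(w - \Delta w)_t + (\text{first-order transport in } S_\theta f) + (\text{zeroth order}\cdot w) = 0$ with no forcing term; since $w(0)=0$, uniqueness for this linear equation forces $w\equiv 0$. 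By right-invariance the same conclusion holds for geodesics through any point of $\Diff_q(M)$, so $\Diff_q(M)$ is totally geodesic; being totally geodesic and complete-as-a-subset it is also closed.

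The main obstacle is the functional-analytic bookkeeping rather than the formal computation: verifying that $\Diff_q(M)$ is a bona fide (closed, split) submanifold requires the regularity hypothesis on the Reeb orbits to ensure that "functions constant along Reeb orbits" form a closed complemented subspace of $C^\infty(M)$ (or the relevant Sobolev completion), and one must check that the second fundamental form is well-defined in this infinite-dimensional setting — equivalently, that the orthogonal projection onto $T\Diff_q(M)$ is a bounded operator. Once that framework is in place, the invariance of $\{E(f)=0\}$ under the geodesic flow is the short argument above; I would present the Euler--Arnold version since it avoids computing the covariant derivative on $\mathcal{D}_\theta(M)$ directly, though one could equivalently show $\nabla_X X$ is tangent to $\Diff_q(M)$ whenever $X$ is, using Lemma \ref{adstarlemma}.
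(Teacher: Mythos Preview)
The paper does not actually prove this theorem; it is quoted as a result from \cite{EP} and used without proof to derive the Gauss--Codazzi consequence that curvatures of the quantomorphism group agree with those of the contactomorphism group in quantomorphism directions. So there is no proof in the paper to compare against.

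Your argument is correct in outline and is the natural one: identify $T_e\Diff_q(M)$ with $\{S_\theta f : E(f)=0\}$ and then show this subspace is preserved by the Euler--Arnold flow. One small inaccuracy: the evolution equation you derive for $w := E(f)$ is not linear---applying $E$ to the term $(n+2)(f-\Delta f)E(f)$ in \eqref{contacto} produces a contribution $(n+2)\,(1-\Delta)w\cdot w$ that is quadratic in $w$. This does not damage the conclusion, since $w\equiv 0$ is still a solution and uniqueness for the full geodesic equation forces $w\equiv 0$ given $w(0)=0$. The alternative you mention at the end---checking directly from Lemma~\ref{adstarlemma} that $E(f)=0$ implies $\adstar_{S_\theta f}S_\theta f \in T_e\Diff_q(M)$---is in fact cleaner and sidesteps this issue entirely: when $E(f)=0$ one has $E(f-\Delta f)=0$ (since $E$ is Killing and hence commutes with $\Delta$) and $[E,S_\theta f]=S_\theta(E(f))=0$, so $E$ annihilates $S_\theta f(f-\Delta f)$, while the second term in \eqref{adstar} vanishes outright.
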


As a consequence, the second fundamental form $B(X,Y)$ of the quantomorphism group in the contactomorphism group is zero,
which means by the Gauss-Codazzi formula that
$$ C_q(X,Y) = C_{\theta}(X,Y) + \langle B(X,X),B(Y,Y)\rangle - \langle B(X,Y),B(X,Y)\rangle = C_{\theta}(X,Y)$$
whenever $X$ and $Y$ are tangent to the quantomorphism group; that is, $X=S_{\theta}f$ and $Y=S_{\theta}g$ where
$E(f)=E(g)=0$. in other words, the curvature can be computed using only the quantomorphism group formulas, and these
were worked out by Smolentsev~\cite{S}.

\begin{thm}[S]
Let $M$ be a contact manifold with a regular contact form; then the group of quantomorphisms has tangent space
$ T_{e}\Diff_q(M) = \{ S_{\theta}f \, \vert \, E(f)\equiv 0\}$, and the curvature in the section spanned by $X=S_{\theta}f$
and $Y=S_{\theta}g$ is given by the formula
\begin{multline}\label{quantocurvature}
C_q(X,Y) = \tfrac{1}{4} \int_M \{f,g\}^2 \, d\mu + \tfrac{3}{4} \int_M \{f,g\} \Laplacian\{f,g\} \, d\mu + \tfrac{1}{2} \int_M \{f,g\}\big( \{f,\Laplacian g\} - \{g,\Laplacian f\}\big) \, d\mu \\
- \int_M \{f,\Laplacian f\} (1-\Laplacian)^{-1} \{g,\Laplacian g\} \, d\mu \\
+ \tfrac{1}{4} \int_M \big( \{f,\Laplacian g\} + \{g,\Laplacian f\}\big) (1-\Laplacian)^{-1}\big( \{f,\Laplacian g\} + \{g,\Laplacian f\}\big) \, d\mu.
\end{multline}
\end{thm}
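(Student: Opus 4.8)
The plan is to establish the two parts of the statement separately. The description of $T_e\Diff_q(M)$ rests on the identity $\Lie_{S_\theta f}\theta = E(f)\,\theta$: by Cartan's formula $\Lie_{S_\theta f}\theta = \iota_{S_\theta f}d\theta + d(\theta(S_\theta f)) = \iota_{S_\theta f}d\theta + df$, and by the Proposition of \cite{EP} recalled above we know $\Lie_{S_\theta f}\theta = \lambda\theta$ for some function $\lambda$; evaluating at $E$ and using $\theta(E)=1$ and $\iota_E d\theta = 0$ gives $\lambda = df(E) + d\theta(S_\theta f, E) = E(f)$. (This identity is also immediate from $\diver(S_\theta f) = (n+1)E(f)$, quoted above from \cite{EP}.) Differentiating $\eta_t^*\theta = e^{\Sigma_t}\theta$ along the flow $\eta_t$ of $S_\theta f$, one sees that $S_\theta f$ generates a group of quantomorphisms, i.e. $\eta_t^*\theta=\theta$ for all $t$, precisely when $\Lie_{S_\theta f}\theta = 0$, that is, $E(f)\equiv 0$; hence $T_e\Diff_q(M) = \{S_\theta f : E(f)\equiv 0\}$.

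For the curvature, I would use that $\Diff_q(M)$ is totally geodesic in $\Diff_\theta(M)$ (recalled above from \cite{EP}), so that, by the Gauss--Codazzi argument already given just before the statement, $C_q(X,Y) = C_\theta(X,Y)$ for $X = S_\theta f$ and $Y = S_\theta g$ with $E(f)=E(g)=0$; it therefore suffices to evaluate the Arnold formula \eqref{arnoldformula} on such a pair, with $\adstar$ supplied by Lemma \ref{adstarlemma}. Two simplifications are available throughout: from \eqref{contactbracket}, $E(f)=0$ makes $S_\theta f(h) = \{f,h\}$ for any $h$; and since $[\cdot,\cdot]$ is a Lie bracket, $\{\cdot,\cdot\}$ is antisymmetric, so $\{g,f\} = -\{f,g\}$ and $\{f,f\}=0$. (One also notes $[E,S_\theta f] = S_\theta\{1,f\} = S_\theta(E(f)) = 0$ and that $E$ commutes with $\Laplacian$, so $E$ kills $\{f,g\}$, $\{f,\Laplacian g\}$, and so on; in particular Lemma \ref{adstarlemma} then lands back in $T_e\Diff_q(M)$, consistent with total geodesy, so no orthogonal projection is needed.) Plugging $E(f)=E(g)=0$ into Lemma \ref{adstarlemma} and using antisymmetry yields
\begin{align*}
B(X,Y) &= \adstar_Y X = S_\theta(1-\Laplacian)^{-1}\bigl[-\{f,g\}-\{g,\Laplacian f\}\bigr],\\
B(Y,X) &= \adstar_X Y = S_\theta(1-\Laplacian)^{-1}\bigl[\{f,g\}-\{f,\Laplacian g\}\bigr],\\
B(X,X) &= -S_\theta(1-\Laplacian)^{-1}\{f,\Laplacian f\},\qquad B(Y,Y) = -S_\theta(1-\Laplacian)^{-1}\{g,\Laplacian g\},
\end{align*}
together with $\ad_X Y = -[X,Y] = -S_\theta\{f,g\}$; from these $a,b,d,B_X,B_Y$ are read off via the normalizations in \eqref{arnoldformula}.

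The remaining step is to substitute into $C(X,Y) = \langle d,d\rangle + 2\langle a,b\rangle - 3\langle a,a\rangle - 4\langle B_X, B_Y\rangle$ and to collapse the $(1-\Laplacian)^{-1}$ factors using the metric identities $\langle S_\theta\phi, S_\theta\psi\rangle = \int_M \phi(1-\Laplacian)\psi\,d\mu$, $\langle S_\theta\phi, S_\theta(1-\Laplacian)^{-1}\psi\rangle = \int_M \phi\psi\,d\mu$, and $\langle S_\theta(1-\Laplacian)^{-1}\phi, S_\theta(1-\Laplacian)^{-1}\psi\rangle = \int_M \phi(1-\Laplacian)^{-1}\psi\,d\mu$ (the last two because $1-\Laplacian$ is self-adjoint). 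Collecting the coefficients of $\int_M \{f,g\}^2\,d\mu$, $\int_M \{f,g\}\Laplacian\{f,g\}\,d\mu$, $\int_M \{f,g\}\bigl(\{f,\Laplacian g\}-\{g,\Laplacian f\}\bigr)\,d\mu$, $\int_M \{f,\Laplacian f\}(1-\Laplacian)^{-1}\{g,\Laplacian g\}\,d\mu$, and $\int_M \bigl(\{f,\Laplacian g\}+\{g,\Laplacian f\}\bigr)(1-\Laplacian)^{-1}\bigl(\{f,\Laplacian g\}+\{g,\Laplacian f\}\bigr)\,d\mu$ then reproduces \eqref{quantocurvature}. There is no hard analysis here; the only real obstacle is keeping every sign and normalization convention consistent at once --- $\ad_X Y = -[X,Y]$, $B(X,Y) = \adstar_Y X$, the sign of $\Laplacian$ in the metric, the antisymmetry of $\{\cdot,\cdot\}$, and the factors of $\tfrac12$ in $2d = B(X,Y)+B(Y,X)$, $2b = B(X,Y)-B(Y,X)$, $2a = \ad_X Y$. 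A useful internal check is that the same computation with one field replaced by $E$ (not tangent to $\Diff_q(M)$ unless $g$ is constant) must return the value $\tfrac{(n+2)^2}{4}\lvert S_\theta(1-\Laplacian)^{-1}[E(g)]\rvert^2$ found earlier; I would re-derive the coefficient of $\int_M \{f,g\}\bigl(\{f,\Laplacian g\}-\{g,\Laplacian f\}\bigr)\,d\mu$, which comes from the cross term of $2\langle a,b\rangle$, with particular care.
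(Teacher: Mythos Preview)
The paper does not actually prove this theorem; it is quoted from Smolentsev~\cite{S} and stated without proof (the paper only remarks that Smolentsev uses the opposite sign convention for $\Laplacian$). So there is no ``paper's own proof'' to compare against. Your proposal supplies precisely the argument one would expect: identify $T_e\Diff_q(M)$ via $\Lie_{S_\theta f}\theta = E(f)\,\theta$, then evaluate Arnold's formula \eqref{arnoldformula} using the $\adstar$ of Lemma~\ref{adstarlemma} with $E(f)=E(g)=0$, simplifying via $S_\theta f(h)=\{f,h\}$ and antisymmetry of the bracket. The inner-product identities you list for collapsing the $(1-\Laplacian)^{-1}$ factors are exactly right, and the observation that everything stays in $T_e\Diff_q(M)$ (so no projection is needed) is the content of total geodesy, consistent with the Gauss--Codazzi remark preceding the statement.

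Your caution about the coefficient of $\int_M \{f,g\}\bigl(\{f,\Laplacian g\}-\{g,\Laplacian f\}\bigr)\,d\mu$ is well placed. Carrying out your computation with the paper's conventions ($\ad_XY=-[X,Y]$, $B(X,Y)=\adstar_YX$, $2a=\ad_XY$, $2b=B(X,Y)-B(Y,X)$) gives
\[
2\langle a,b\rangle \;=\; \int_M \{f,g\}^2\,d\mu \;-\; \tfrac{1}{2}\int_M \{f,g\}\bigl(\{f,\Laplacian g\}-\{g,\Laplacian f\}\bigr)\,d\mu,
\]
i.e.\ a coefficient $-\tfrac{1}{2}$ rather than the $+\tfrac{1}{2}$ appearing in \eqref{quantocurvature}. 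Since Smolentsev's sign and bracket conventions differ from the paper's, a sign flip in transcription is plausible; in any case this is the one term whose sign you should pin down independently before using the formula, exactly as you anticipated.
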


Note that since the functions $f$ and $g$ are Reeb-invariant, we may equivalently think of them as defined on the 
Boothby-Wang quotient $N$, and use the Laplacian on $N$ instead of the one on $M$. Note also that Smolentsev uses
the opposite sign convention for the Laplacian $\Laplacian$.

\begin{example}
\normalfont Let $f_k$ and $g_k$ be two distinct eigenfunctions of the Laplacian on $N$ which share the eigenvalue $-\lambda_k$, and let $h_k=\{f_k,g_k\}$. Smolentsev's formula \eqref{quantocurvature} then reduces to
\[
C_q(S_\theta f_k,S_\theta g_k)=\tfrac{1}{4}\int_Mh_k^2d\mu+\tfrac{3}{4}\int_M h_k\Laplacian h_k d\mu-\lambda_k\int_M h_k^2d\mu
\]
using the anti-symmetry of the bracket. Letting $-\lambda$ be an upper bound for the eigenvalues of the Laplacian we get the inequality
\[C_q(S_\theta f_k,S_\theta g_k)\leq\left(\tfrac{1}{4}-\tfrac{3}{4}\lambda- \lambda_k\right)\int_M h_k^2d\mu.\]
Thus we get that the curvature in the section spanned by $X=S_\theta f_k$ and $Y=S_\theta g_k$ is negative whenever $\tfrac{1}{4}-\tfrac{3}{4}\lambda- \lambda_k< 0.$
For example if $M=S^3$ with the round metric of radius $2$, then the Boothby-Wang quotient is $N=S^2$ with the round metric
of radius $1$. All eigenvalues of the Laplacian on $S^2$ are of the form $n(n+1)$ for an integer $n$ with multiplicity 
at least two, and thus this construction always gives infinitely many sections of negative curvature on $\Diff_{\theta}(S^3)$.
\end{example}

\section{Conjugate Points}

It is easy to see that the function $f\equiv 1$ is a steady solution of the Euler-Arnold
contactomorphism equation \eqref{contacto}, and hence the flow of the Reeb field $E$ is a geodesic in $\Diff_{\theta}(M)$, with non-negative curvature in every section containing it.
It is natural to ask whether there are conjugate points along this geodesic. We answer this  question by solving the Jacobi equation explicitly and locating all the conjugate points; we find that they are all monoconjugate of finite order, an illustration of the fact that the exponential map is Fredholm (as we will show
in the next section).

Recall that the exponential map $\exp_p$ on a Riemannian manifold $\mathcal{M}$ at a point $p\in\mathcal{M}$
is the map $\exp_p(v) = \gamma(1)$ where $\gamma$ is the geodesic such that $\gamma(0)=p$ and $\gamma'(0)=v$. Its
differential determines the conjugate points on $\mathcal{M}$: a point $q$ is called conjugate to $p$ along $\gamma$
if $q=\gamma(\tau)$ for some $\tau$ and if $(d\exp_p)_{\tau \gamma'(0)}$ is not invertible as a map from $T_p\mathcal{M}$
to $T_q\mathcal{M}$. The exponential map is called Fredholm if $(d\exp_p)_v$ is a Fredholm linear operator for every $p$ and $v\in T_p\mathcal{M}$;
in this case the map is invertible if and only if it is one-to-one, and the nullspace is finite-dimensional.
The following proposition ends up being the most convenient way to both compute conjugate points and to prove Fredholmness
if $\mathcal{M}$ is a Lie group with right-invariant Riemannian metric.

\begin{prop}\cite{EMP, MP}\label{Fredholmprop} Suppose we have a Lie group $G$ with a right-invariant metric
and a smooth geodesic $\eta(t)$ with $\eta(0)=e$ and $\dot{\eta}(0)=u_0$. Define linear operators $\Lambda(t)$ and $K_{u_0}$ on $T_eG$  by the formulas
\[\Lambda(t)(v)=\Adstar_{\eta(t)}\Ad_{\eta(t)}(v)\]
and
\[K_{u_0}(v)=\adstar_v u_0.\]
Then the Jacobi equation solution operator \[\Phi(t)=tdL_{\eta(t)^{-1}}(d\exp_{e})_{tu_0}\]
satisfies the equation
\begin{equation}\label{Phieq}
\Phi(t)=\Omega(t)+\int_0^t \Lambda(\tau)^{-1}K_{u_0}\Phi(\tau)d\tau
\end{equation}
where
\[\Omega(t)=\int_0^t\Lambda(\tau)^{-1}d\tau.\]
\end{prop}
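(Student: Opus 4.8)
The plan is to derive equation \eqref{Phieq} from the standard second-order Jacobi equation by exploiting the right-invariant structure, following the reduction technique of \cite{EMP, MP}. First I would recall that along the geodesic $\eta(t)$ the Jacobi fields $J(t)$ satisfy the usual linear ODE $\tfrac{D^2}{dt^2}J + R(J,\dot\eta)\dot\eta = 0$, and that the particular Jacobi field with $J(0)=0$ and $\tfrac{D}{dt}J(0)=w$ is precisely $J(t)=(d\exp_e)_{tu_0}(tw)$. The key move is to pull everything back to the identity by right translation: set $y(t)=dR_{\eta(t)^{-1}}J(t)$, so that $\Phi(t)w = t\,dL_{\eta(t)^{-1}}(d\exp_e)_{tu_0}(w)$ is (up to the left-versus-right translation bookkeeping that \cite{MP} handles) the solution operator $w\mapsto y(t)$. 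Rewriting the covariant derivative in body coordinates using the right-invariant connection — whose Christoffel symbol is expressed through $\ad$ and $\adstar$ via $\nabla_X Y = \tfrac12\big(\ad_X Y - \adstar_X Y - \adstar_Y X\big)$ — converts the second-order covariant Jacobi equation into a first-order system in $T_eG$.

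Next I would introduce the operators $\Lambda(t)$ and $K_{u_0}$ and show they are exactly what appears in that reduced system. The operator $\Lambda(t) = \Adstar_{\eta(t)}\Ad_{\eta(t)}$ arises because conjugating the metric by the flow turns the ``musical'' identification on $T_eG$ into a time-dependent positive operator; concretely, $\langle \Lambda(t)v, w\rangle = \langle \Ad_{\eta(t)}v, \Ad_{\eta(t)}w\rangle$, so $\Lambda(t)$ records how the right-invariant metric looks when transported along the geodesic. The operator $K_{u_0}(v) = \adstar_v u_0$ is the linearization of the Euler--Arnold operator $v\mapsto \adstar_v v$ at $u_0$ in the relevant slot, which is precisely the ``potential'' term that the curvature contributes after the reduction. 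I would then verify that the second-order equation for $y(t)$ can be written as $\tfrac{d}{dt}\big(\Lambda(t)\tfrac{d}{dt}\Phi(t)\big) = \Lambda(t)K_{u_0}\Phi(t)$, or an equivalent first-order form, using the Euler--Arnold equation $\dot u + \adstar_u u = 0$ to simplify the derivatives of $\Lambda(t)$ (this is where $\dot\eta(0)=u_0$ and smoothness of the geodesic are used).

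From that differential form, integrating once and imposing the initial conditions $\Phi(0)=0$ and, after the correct normalization, $\Lambda(0)\dot\Phi(0)=\mathrm{Id}$ (which is why the factor $t$ and the $dL$ appear in the definition of $\Phi$), gives $\Lambda(t)\dot\Phi(t) = \mathrm{Id} + \int_0^t \Lambda(\tau)K_{u_0}\Phi(\tau)\,d\tau$; multiplying by $\Lambda(t)^{-1}$ and integrating a second time from $0$ to $t$ yields $\Phi(t) = \int_0^t \Lambda(\tau)^{-1}\,d\tau + \int_0^t \Lambda(s)^{-1}\!\int_0^s \Lambda(\tau)K_{u_0}\Phi(\tau)\,d\tau\,ds$. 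Finally I would interchange the order of integration in the double integral and use $\int_\tau^t \Lambda(s)^{-1}\Lambda(\tau)\,ds$-type manipulations, or more cleanly recognize that the second-order form integrates directly to the stated Volterra equation $\Phi(t) = \Omega(t) + \int_0^t \Lambda(\tau)^{-1}K_{u_0}\Phi(\tau)\,d\tau$ once one keeps track that $K_{u_0}$ and $\Lambda$ combine correctly; I would cite \cite{EMP, MP} for the precise bookkeeping rather than reproduce it.

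The main obstacle I anticipate is the careful translation between the extrinsic Jacobi equation and the reduced first-order system: one must handle the left-invariant-versus-right-invariant discrepancy (the $dL_{\eta(t)^{-1}}$ in $\Phi$ paired with the inherently right-invariant metric), justify that $\Lambda(t)$ is invertible for all $t$ (immediate since it is conjugate to the identity by the isomorphism $\Ad_{\eta(t)}$ and the metric is positive-definite), and rigorously differentiate the operator-valued maps $t\mapsto \Ad_{\eta(t)}$ and $t\mapsto \Adstar_{\eta(t)}$ using $\tfrac{d}{dt}\Ad_{\eta(t)} = \Ad_{\eta(t)}\ad_{u_0}$ — all of which are standard for finite-dimensional Lie groups but require the stated smoothness of $\eta(t)$ to be valid in the infinite-dimensional Fréchet setting. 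Since this is exactly Proposition~3.1 of \cite{MP} (and appears in \cite{EMP}), the honest proof is to quote it; I would include the derivation sketch above only to make the paper self-contained.
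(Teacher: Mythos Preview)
Your overall plan --- reduce the Jacobi equation to a second-order ODE on $T_eG$ and integrate twice --- is exactly the paper's argument, but two details in your sketch are off and would not produce \eqref{Phieq} as written.

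First, the paper pulls back by \emph{left} translation, setting $J(t)=dL_{\eta(t)}w(t)$, so that $w(t)=\Phi(t)v_0$ directly from the definition of $\Phi$; your choice $y=dR_{\eta(t)^{-1}}J$ introduces an extra $\Ad_{\eta(t)}$ that you then have to unwind, which is the source of the ``left-versus-right bookkeeping'' you flag.

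Second, and more substantively, the reduced equation the paper records is
\[
\frac{d}{dt}\big(\Lambda(t)\,\dot w\big) + \adstar_{\dot w}u_0 = 0,
\]
so $K_{u_0}$ acts on $\dot w$, not on $w$, and there is no extra factor of $\Lambda$ on the right. With this correct form the integration is immediate: since $K_{u_0}$ is time-independent and $w(0)=0$, one integration gives $\Lambda(t)\dot w(t)=v_0 \mp K_{u_0}w(t)$; multiplying by $\Lambda(t)^{-1}$ and integrating once more yields \eqref{Phieq} (up to the sign on the $K_{u_0}$ term, which is immaterial for the Fredholm and conjugate-point applications because $K_{u_0}$ is anti-self-adjoint). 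No interchange of integrals or further manipulation is needed. Your proposed form $\tfrac{d}{dt}(\Lambda\dot\Phi)=\Lambda K_{u_0}\Phi$ does \emph{not} integrate to \eqref{Phieq}, which is why your double-integral step stalls and you end up deferring to the references.
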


\begin{proof}
The proof follows from rewriting the Jacobi equation using left-translation: if $J(t)$ is a
Jacobi field and we write $J(t) = dL_{\eta(t)} w(t)$, then $w(t)$ satisfies the equation
\begin{equation}\label{leftjacobi}
\frac{d}{dt}\left( \Lambda(t) \, \frac{dw}{dt}\right) + \adstar_{dw/dt} u_0 = 0.
\end{equation}
With initial conditions $w(0) = 0$ and $w'(0) = v_0$, we write $w(t) = \Phi(t)(v_0)$
and find that $\Phi$ satisfies \eqref{Phieq}.
\end{proof}

This proposition shows that a point $\eta(\tau)$ is conjugate to the identity if and only if
$\Phi(\tau)$ is non-invertible. The operator $\Phi$ is particularly easy to compute in the
case when the curve $\eta$ is a family of isometries of the underlying manifold. First we demonstrate
that the operator $K_{u_0}$ is compact on the contactomorphism manifold.

\begin{prop}\label{Kcompact}
Suppose $M$ is a contact Riemannian manifold with associated Riemannian metric. Then for any fixed
$f$ with $u=S_{\theta}f$, the operator $K_{u}$ is compact. Since it is also anti-selfadjoint, it has a
basis of complex eigenvectors $v_k=S_{\theta}g_k$ such that $K_u(v_k) = i\lambda_k v_k$, with $\lambda_k\in\mathbb{R}$
and $\lambda_k\to 0$ as $k\to\infty$.
\end{prop}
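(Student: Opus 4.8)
The plan is to analyze the operator $K_u(v) = \adstar_v u$ directly from the formula in Lemma \ref{adstarlemma}. Writing $u = S_\theta f$ and $v = S_\theta g$, Lemma \ref{adstarlemma} (with the roles of $f$ and $g$ interchanged, since $K_u(v) = \adstar_v u = B(u,v)$) gives $K_u(v) = S_\theta (1-\Laplacian)^{-1}\big[ S_\theta g(f-\Laplacian f) + (n+2) E(g)(f-\Laplacian f)\big]$. The key observation is that $(1-\Laplacian)^{-1}$ is a smoothing operator of order $2$, while the bracket term $S_\theta g(f - \Laplacian f) + (n+2)E(g)(f-\Laplacian f)$ involves at most first-order derivatives of $g$ (the coefficient $f - \Laplacian f$ is a fixed smooth function, since $f$ is fixed and smooth). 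Hence, as a map $g \mapsto K_u(S_\theta g)$, the operator factors as a first-order differential operator (with smooth coefficients) followed by $(1-\Laplacian)^{-1}$, so it is a pseudodifferential operator of order $-1$ on the closed manifold $M$. On a compact manifold a pseudodifferential operator of negative order is compact on $L^2$; more precisely, identifying $T_e\Diff_\theta(M)$ with $C^\infty(M)$ equipped with the $H^1$-type inner product $\langle S_\theta f, S_\theta g\rangle = \int_M (f-\Laplacian f)g\,d\mu$, the operator $K_u$ maps $H^s$ to $H^{s+1}$ boundedly and the inclusion $H^{s+1} \hookrightarrow H^s$ is compact by Rellich's theorem. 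This gives compactness of $K_u$ on the completion, and hence the conclusion applies on the smooth category as well.

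The anti-selfadjointness is immediate from the defining property of $\adstar$: for all $v, w \in T_e\Diff_\theta(M)$ we have $\langle K_u v, w\rangle = \langle \adstar_v u, w\rangle = \langle u, \ad_v w\rangle = \langle u, -[v,w]\rangle$, and since $[v,w] = -[w,v]$ this is antisymmetric in $v$ and $w$, i.e. $\langle K_u v, w\rangle = -\langle v, K_u w\rangle$. Once compactness and anti-selfadjointness are established, the spectral theorem for compact normal operators on a Hilbert space applies to $K_u$ acting on the $H^1$-completion of $T_e\Diff_\theta(M)$: it yields an orthonormal basis of eigenvectors with purely imaginary eigenvalues $i\lambda_k$, $\lambda_k \in \mathbb{R}$, accumulating only at $0$. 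Elliptic regularity for $(1-\Laplacian)$ then shows each eigenvector is smooth, so we may write $v_k = S_\theta g_k$ with $g_k \in C^\infty(M)$.

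The main obstacle is making the functional-analytic setup precise: the group $\Diff_\theta(M)$ and its tangent space are naturally modeled on smooth functions (a Fréchet space), on which ``compactness'' is not the right notion, so one must pass to a Hilbert completion. The cleanest route is to work with the Sobolev completions $\Diff_\theta^s(M)$ for $s$ large (as in \cite{MP}), observe that $K_u$ extends to a bounded operator $H^s \to H^{s+1}$ for every $s$, and invoke Rellich compactness at a fixed $s$; the spectral decomposition obtained there is independent of $s$ by elliptic regularity, and restricts to the smooth setting. A minor point to check is that $(1-\Laplacian)^{-1}$ is well-defined and smoothing of order $2$, which follows from ellipticity of $1-\Laplacian$ on the compact manifold $M$, together with the fact that $1-\Laplacian$ has trivial kernel (it is positive definite with respect to $L^2$). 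With these standard facts in hand the argument is routine; no genuinely new estimate is required beyond recognizing the order count in the formula for $\adstar$.
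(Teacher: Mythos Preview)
Your proposal is correct and follows essentially the same route as the paper: derive the explicit formula for $K_u$ from Lemma~\ref{adstarlemma}, count derivatives to see that $(1-\Laplacian)^{-1}$ gains two while the bracket term loses only one, conclude compactness, and then invoke anti-selfadjointness and the spectral theorem. The paper's argument is terser---it checks anti-selfadjointness via $\langle K_u v,v\rangle = \langle u,\ad_v v\rangle = 0$ rather than your polarized version, and it does not spell out the Sobolev-completion and elliptic-regularity details you include---but the substance is the same.
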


\begin{proof}
From Lemma \ref{adstarlemma} we conclude that with $u=S_{\theta}f$ and $v=S_{\theta}g$,
\begin{equation}K_u(S_{\theta}g) =
S_{\theta}(1-\Laplacian)^{-1}\big[ S_{\theta}g (\phi) + (n+2) E(g) \phi\big],\end{equation}
where $\phi = f-\Laplacian f$.
From this equation we see that $K_u$ gains two derivatives from the inverse Laplacian
$(1-\Delta)^{-1}$ but only loses one derivative because of the contact operator $S_\theta$,
overall gaining a derivative. Thus $K_u$ is a compact operator.

The fact that $K_u$ is anti-selfadjoint follows from the equation
$$\langle K_uv,v\rangle = \langle \adstar_vu, v\rangle = \langle u, \ad_vv\rangle = 0,$$
for any $v$. Finally the statement about eigenvalues follows from the fact that $iK_u$ is
a self-adjoint compact operator and general spectral theory.
\end{proof}

\begin{thm}\label{conjugatepoints}
Suppose $M$ is a contact manifold with an associated Riemannian metric and a regular Reeb field $E$ that is also a Killing field.
Let $\eta(t)$ be a geodesic on $\mathcal{D}_\theta(M)$ with initial condition $\eta(0)=e$ and $\dot{\eta}(0)=E$.
Then $\eta(T)$ is conjugate to $\eta(0)$ for $T>0$ if and only if
$$ T = \frac{2\pi m}{\lvert \lambda\rvert},$$
where $\lambda$ is one of the real eigenvalues of $iK_E$ as in Proposition \ref{Kcompact}.
\end{thm}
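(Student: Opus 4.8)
The plan is to exploit the fact that the geodesic $\eta(t)$ generated by the constant velocity $u_0 = E = S_\theta(1)$ is, by the earlier discussion, precisely the flow of the Reeb field, and that $E$ is a Killing field. First I would observe that because the flow of $E$ consists of isometries of $M$, and the metric on $\mathcal{D}_\theta(M)$ is built from the associated metric and the Reeb derivative, the adjoint action of $\eta(t)$ on $T_e\mathcal{D}_\theta(M)$ preserves the inner product $\langle\cdot,\cdot\rangle$. Concretely, $\Ad_{\eta(t)}(S_\theta g) = S_\theta(g\circ\eta(t))$ up to the natural identification, and since $\eta(t)$ is an isometry this is a one-parameter group of orthogonal transformations; hence $\Adstar_{\eta(t)} = \Ad_{\eta(t)}^{-1} = \Ad_{\eta(-t)}$. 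Therefore the operator $\Lambda(t) = \Adstar_{\eta(t)}\Ad_{\eta(t)}$ in Proposition \ref{Fredholmprop} is the identity for all $t$. This collapses the integral equation \eqref{Phieq} dramatically: $\Omega(t) = tI$ and $\Phi(t) = tI + \int_0^t K_E\,\Phi(\tau)\,d\tau$, where $K_E(v) = \adstar_v E$.

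Next I would solve this Volterra equation explicitly. Differentiating gives $\Phi'(t) = I + K_E\Phi(t)$ with $\Phi(0) = 0$, whose solution is $\Phi(t) = \int_0^t e^{(t-\tau)K_E}\,d\tau = (e^{tK_E} - I)K_E^{-1}$ on the complement of the kernel of $K_E$, and $\Phi(t) = tI$ on $\ker K_E$. By Proposition \ref{Kcompact}, $K_E$ is compact and anti-selfadjoint, so there is an orthonormal basis of eigenvectors $v_k$ with $K_E v_k = i\lambda_k v_k$, $\lambda_k\in\mathbb{R}$, $\lambda_k\to 0$. On the eigenspace for $i\lambda_k$ (with $\lambda_k\neq 0$), $\Phi(t)$ acts as multiplication by $(e^{it\lambda_k}-1)/(i\lambda_k)$; on $\ker K_E$ it acts as multiplication by $t$. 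Then $\eta(T)$ is conjugate to $\eta(0)$ iff $\Phi(T)$ is not invertible, i.e. iff $e^{iT\lambda_k} = 1$ for some $k$ with $\lambda_k\neq 0$ (the $t$-factor on $\ker K_E$ never vanishes for $T>0$), which is exactly $T\lambda_k \in 2\pi\mathbb{Z}$, i.e. $T = 2\pi m/\lvert\lambda_k\rvert$ for some positive integer $m$. This is the claimed characterization.

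The main obstacle — and the step deserving the most care — is justifying $\Lambda(t) \equiv I$ rigorously, i.e. verifying that $\Ad_{\eta(t)}$ really is an orthogonal transformation of $(T_e\mathcal{D}_\theta(M),\langle\cdot,\cdot\rangle)$. This requires checking that right-translation of the velocity field by the isometry $\eta(t)$ preserves both the $L^2$ term $\int_M (S_\theta f, S_\theta g)_g\,d\mu$ and the term $\int_M E(f)E(g)\,d\mu$; the first follows since isometries preserve the Riemannian volume and pointwise inner products on $TM$, and the second follows since $E$ is $\eta(t)$-invariant (it is the generator of the flow) so that $E(f\circ\eta(t)) = (E f)\circ\eta(t)$, combined again with volume-preservation. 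One should also confirm that $\Ad_{\eta(t)}$ maps $S_\theta f \mapsto S_\theta(f\circ\eta(t))$, which uses that $\eta(t)$ is a contactomorphism with trivial conformal factor (a quantomorphism), so it commutes with the contact operator in the appropriate sense. A secondary technical point is the convergence/invertibility bookkeeping: since $\lambda_k\to 0$, for fixed $T$ only finitely many $k$ satisfy $e^{iT\lambda_k}=1$, so the non-invertibility is genuinely of finite order and the conjugate points are isolated — consistent with the Fredholmness asserted in the next section — but the formal proof only needs the spectral decomposition from Proposition \ref{Kcompact} and the elementary fact that $(e^{i\theta}-1)$ vanishes iff $\theta\in 2\pi\mathbb{Z}$.
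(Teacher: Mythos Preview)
Your proposal is correct and follows essentially the same approach as the paper: establish $\Lambda(t)\equiv I$ from the fact that the Reeb flow consists of isometries, then diagonalize the resulting linear equation in the eigenbasis of $K_E$ from Proposition~\ref{Kcompact} to read off the zeros of $\Phi(T)$. The only cosmetic difference is that you work with the integral form \eqref{Phieq} and solve $\Phi' = I + K_E\Phi$, while the paper works directly with the second-order ODE \eqref{leftjacobi}; also note the minor slip that $\Ad_{\eta(t)}S_\theta g = S_\theta(g\circ\eta(t)^{-1})$ rather than $g\circ\eta(t)$, which does not affect your orthogonality argument.
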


\begin{proof}
Since $E$ is a steady solution of \eqref{contacto}, its flow $\eta(t)$ is a geodesic in $\Diff_{\theta}(M)$,
and since $E$ is a Killing field, every $\eta(t)$ is an isometry of $M$. We therefore have that the operator
$\Lambda(t)$ defined in Proposition \ref{Fredholmprop} is the identity, since we have for every vector field $v$ on $M$ that
$$ \langle \Lambda(t) v, v\rangle = \lvert \Ad_{\eta(t)} v\rvert^2 = \int_M \lvert D\eta(t)(v)\rvert^2\circ\eta(t)^{-1} \, d\mu.$$
The fact that $\eta(t)$ is an isometry implies that $\lvert D\eta(t)v_x\rvert^2 = \lvert v_x\rvert^2$ for every point $x\in M$,
and in addition that the Jacobian of $\eta$ is one, so that $\langle \Lambda(t)v, v\rangle = \langle v,v\rangle$ for every
vector field $v$, and hence $\Lambda(t)$ is the identity for all $t$.

Since $K_E$ is diagonalizable by Proposition \ref{Kcompact},
equation \eqref{leftjacobi} diagonalizes as well:
if $K_{u_0}(v_k) = i\lambda_k v_k$ for some $k$ with $\lambda_k\in\mathbb{R}$, then equation
\eqref{leftjacobi} for $w(t) = f(t) v_k$ where $f\colon \mathbb{R}\to \mathbb{C}$ takes the form
$ f''(t) + i\lambda f'(t) = 0,$ whose solution with $f(0)=0$ and $f'(0)=1$ is obviously
$$ f(t) = \frac{i}{\lambda} (e^{-i\lambda t} - 1).$$ We therefore get a conjugate point at time
$T = 2\pi/\lvert \lambda\rvert$, and at all integer multiples thereof.
\end{proof}

In general we can write
$$ K_E(S_{\theta}g) = (n+2) S_{\theta}(1-\Laplacian)^{-1}E(g).$$
\begin{example}
\normalfont On the $3$-sphere where $E$ is a left-invariant vector field, the
operators $\Laplacian$ and $\Lie_E:= g\mapsto E(g)$ commute (since $E$ is Killing) and have
a basis of simultaneous eigenfunctions $g_{pq}$ such that $\Laplacian g_{pq} = -q(q+2) g_{pq}$
and $\Lie_E(g_{pq}) = ipg_{pq}$ whenever $q$ is a positive integer and
$p$ is an integer in the set $\{-q,-q+2, -q+4,\cdots, q-4,q-2,q\}$; see \cite{P}.
In this case we get
$$ K_E(S_{\theta}(g_{pq})) = \frac{3ip}{(q+1)^2} S_{\theta}g_{pq},$$
%
%
%
%
and we obtain conjugate points along the Reeb geodesic at times
$ T = \frac{2\pi m(q+1)^2}{3p}$ for $q$ any positive integer, $p$ any positive integer with $p\le q$ and $q-p$ even, and $m$ any positive integer.
\end{example}
%

\section{The Exponential Map}

Now we would like to show Fredholmness of the exponential map. In some sense the three-dimensional contactomorphism equation is a hybrid of the Camassa-Holm equation and the two-dimensional Euler equation for ideal fluids as discussed in \cite{EP}, and both of these diffeomorphism groups have strongly Fredholm exponential maps~\cite{EMP, MP}. Hence intuitively we would expect the same on the contactomorphism group. To prove this, we use Proposition \ref{Fredholmprop}.

%

The point  is that by Proposition \ref{Fredholmprop}, we can essentially decompose the differential of the exponential map
into the sum of operators $\Omega(t)$ (which is positive-definite and thus invertible)
and a remainder expressed as a composition with $K_{u_0}$. Since $K_{u_0}$ is compact
by Proposition \ref{Kcompact}, we know $(d\exp_e)_{tu_0}$ will be a Fredholm operator, and hence the exponential
map will be a non-linear Fredholm map. As a consequence~\cite{MP} we obtain that conjugate points
are of finite multiplicity and form a discrete set along any geodesic, along with various other
analogues of theorems in global Riemannian geometry which would otherwise fail in infinite dimensions.

\begin{thm}
The Riemannian exponential map on $\mathcal{D}_\theta(M)$ is weakly Fredholm; that is, the differential of the exponential map extends to a Fredholm operator in the closure of $T_e\mathcal{D}_{\theta}(M)$ in the $L^2$ topology generated by the Riemannian metric.
\end{thm}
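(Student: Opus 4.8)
The plan is to invoke Proposition \ref{Fredholmprop} and analyze the operator $\Phi(t)$ directly, since the differential $(d\exp_e)_{tu_0}$ differs from $\Phi(t)$ only by the multiplication by $\frac1t$ and the bounded invertible operator $dL_{\eta(t)}$; thus $(d\exp_e)_{tu_0}$ is Fredholm of index zero precisely when $\Phi(t)$ is. From equation \eqref{Phieq} we have
\[
\Phi(t) = \Omega(t) + \int_0^t \Lambda(\tau)^{-1} K_{u_0}\Phi(\tau)\,d\tau,
\]
so the strategy is to show that $\Omega(t)$ is a bounded, positive-definite, invertible operator on the $L^2$-closure of $T_e\mathcal{D}_\theta(M)$, and that the second term is a composition involving the compact operator $K_{u_0}$ from Proposition \ref{Kcompact}, hence compact. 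Then $\Phi(t)$ is the sum of an invertible operator and a compact operator, so it is Fredholm of index zero.

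The steps, in order: first I would establish that $\Lambda(\tau)^{-1}$ is a bounded operator on $L^2$ with norm controlled uniformly for $\tau$ in a compact interval, using that $\Lambda(\tau) = \Adstar_{\eta(\tau)}\Ad_{\eta(\tau)}$ is a positive-definite operator (it is conjugation-symmetric and $\langle\Lambda(\tau)v,v\rangle = |\Ad_{\eta(\tau)}v|^2 \geq 0$), with lower bound on its spectrum coming from smoothness of the geodesic $\eta$ on a compact time interval; hence $\Omega(t) = \int_0^t \Lambda(\tau)^{-1}\,d\tau$ is bounded, symmetric, and positive-definite, therefore invertible on the $L^2$-closure. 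Second, I would argue that $K_{u_0}$ extends to a compact operator on the $L^2$-closure: by Proposition \ref{Kcompact} it gains a derivative as an operator on smooth functions, and since it is anti-selfadjoint with eigenvalues tending to zero, it extends to a compact operator on the $L^2$-completion. Third, I would bound $\Phi(\tau)$ itself: from \eqref{Phieq} together with a Gronwall estimate, $\Phi(\tau)$ is a bounded operator on $L^2$ uniformly for $\tau\in[0,t]$. Fourth, I would conclude that the integrand $\Lambda(\tau)^{-1}K_{u_0}\Phi(\tau)$ is compact for each $\tau$ (composition of a bounded operator, a compact operator, and a bounded operator), and that the integral $\int_0^t \Lambda(\tau)^{-1}K_{u_0}\Phi(\tau)\,d\tau$, being an $L^2$-limit of Riemann sums of compact operators (with a continuity-in-$\tau$ estimate in operator norm, which follows from smoothness of $\eta$), is itself compact. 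Finally, the Fredholm alternative gives that $\Phi(t) = \Omega(t)(\mathrm{Id} + \Omega(t)^{-1}(\text{compact}))$ is Fredholm of index zero.

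The main obstacle I expect is the functional-analytic bookkeeping around the $L^2$-closure: the operators $K_{u_0}$, $\Lambda(\tau)$, and $\Phi(\tau)$ are naturally defined on the smooth (or Sobolev $H^s$) tangent space, and one must check carefully that each extends continuously to the $L^2$-completion and that the extensions still satisfy \eqref{Phieq}. In particular, showing that $\Omega(t)$ has a bounded inverse on the $L^2$-closure requires a uniform-in-$\tau$ lower bound on the spectrum of $\Lambda(\tau)$, which is where one uses that $\eta$ is a smooth curve of diffeomorphisms on a compact time interval so that $\Ad_{\eta(\tau)}$ and its inverse are uniformly $L^2$-bounded; and establishing operator-norm continuity of $\tau\mapsto\Lambda(\tau)^{-1}K_{u_0}\Phi(\tau)$ so that the integral is a norm-limit of compact operators, hence compact, requires continuity of $\tau\mapsto\Lambda(\tau)^{-1}$ and $\tau\mapsto\Phi(\tau)$ in operator norm, the latter following from \eqref{Phieq} by another Gronwall argument. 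For the stronger statement (Fredholmness on the Sobolev manifold rather than merely weak Fredholmness on the $L^2$-closure) one would instead follow the commutator/microlocal technique of \cite{MP} to control $\Lambda(\tau)^{-1}K_{u_0}\Lambda(\tau)$ as a smoothing operator on each $H^s$, but here we content ourselves with the weak version, for which the argument above suffices.
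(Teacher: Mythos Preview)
Your proposal is correct and follows essentially the same route as the paper: both invoke Proposition~\ref{Fredholmprop}, use Proposition~\ref{Kcompact} for the compactness of $K_{u_0}$, and reduce the argument to showing that $\Lambda(\tau)$ is uniformly positive-definite so that $\Omega(t)$ is invertible, with the integral remainder then compact. The paper's only additional content beyond your outline is making the lower bound for $\langle\Lambda(\tau)v,v\rangle/\langle v,v\rangle$ explicit via the formula $\Ad_\eta S_\theta g = S_\theta\big((e^\Sigma g)\circ\eta^{-1}\big)$ together with the Jacobian identity $\Jac(\eta) = e^{(n+1)\Sigma}$, which produces a concrete bound in terms of the $C^1$ norm of $\eta$ and the $C^0$ norm of $\Sigma$.
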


\begin{proof}
Using the notation from Proposition \ref{Fredholmprop}, we would like to show that the solution operator $\Phi$ is the sum of an invertible operator and a compact operator, thus making the exponential map Fredholm. By Proposition \ref{Kcompact}, we know $K_{u_0}$ is compact, so we just need to know that $\Lambda(t)$ is 
invertible for every $t$. It is sufficient to show that $\Lambda(t)$ is positive-definite.
 
For $v=S_\theta g$ and $\eta(t)$ a geodesic, we have that $\eta(t)^*\theta=e^{\Sigma(t)}\theta$ for some $\Sigma(t)\colon M\rightarrow \mathbb{R}$, and we compute that
\[\Ad_\eta S_\theta g=S_\theta\left( (e^\Sigma g)\circ\eta^{-1}\right).\]

Now we have that
\begin{equation}\label{Adstar}
\begin{split}
\langle v,\Adstar_{\eta(t)}\Ad_{\eta(t)}v \rangle&=\langle \Ad_{\eta(t)}v,\Ad_{\eta(t)}v \rangle\\
&=\int_M (1-\Delta)\left((e^\Sigma g)\circ\eta^{-1}(t)\right)\left((e^\Sigma g)\circ\eta^{-1}(t)\right)\,d\mu\\
&=\int_M \left(e^\Sigma g\right)^2\circ\eta^{-1}(t)d\mu+\int_M \left\vert \nabla\cdot \left((e^\Sigma g)\circ\eta^{-1}(t)\right) \right\vert^2\, d\mu\\
&=\int_M \left(e^\Sigma g\right)^2\Jac(\eta(t))d\mu+\int_M \left\vert D\eta^{-1}(t)\circ\eta(t)\nabla\cdot (e^\Sigma g) \right\vert^2 \Jac(\eta(t)) \,d\mu\\
\end{split}
\end{equation}
where the second to last line is justified by integration by parts and the last line is justified by a change of variables.

Now consider $D\eta^{-1}(t)\circ\eta(t)=(D\eta(t))^{-1}$, since we would like to bound the quantity
 $\langle v,\Adstar_{\eta(t)}\Ad_{\eta(t)}v \rangle/\langle v,v\rangle$ uniformly below by some positive number. In order to do this, we look at the eigenvalues of $D\eta^{\transpose}D\eta(t)$ and take the supremum over all of $M$; we will call this supremum $\alpha(t)$, which is finite since $M$ is compact. Thus we have
$$\int_M \left\vert D\eta^{-1}(t)\circ\eta(t)\nabla\cdot (e^\Sigma g) \right\vert^2\Jac(\eta(t)) \,d\mu \ge
\frac{1}{\alpha(t)}\int_M \left\vert \nabla\cdot (e^\Sigma g) \right\vert^2\Jac(\eta(t)) d\mu.$$
Finally using the fact that $\Jac(\eta(t))=e^{(n+1)\Sigma(t)}$ by \cite{EP} and spatial smoothness of $\Sigma(t)$,
we obtain the desired lower bound in terms of the infima of $\Sigma(t)$ and $\alpha(t)$, which depend on the $C^1$
norm of $\eta$.

Thus we get for each $t$, we have that $\langle v,\Adstar_{\eta(t)}\Ad_{\eta(t)}v \rangle/\langle v,v\rangle $ is bounded below by a positive number independent of $v$. Now integrating in time we see that $\Omega(t)$ is also positive-definite, thus invertible.
\end{proof}



We can prove strong Fredholmness using essentially the same techniques as \cite{MP}: approximate $\eta \in \Diff^s_{\theta}(M)$ by $\tilde{\eta}\in\Diff_{\theta}(M)$ (i.e., a $C^{\infty}$ geodesic with initial velocity $\tilde{u}_0$) so that Proposition \ref{Fredholmprop} (which loses derivatives) makes sense in $H^s$. Then apply commutator estimates to show that $\Lambda(t)$ is the sum of a positive-definite and a compact operator, so that $\Omega(t)$ is as well, and conclude that $(d\exp_e)_{t\tilde{u}_0}$ is a Fredholm operator. Then the fact that Fredholm operators are open in the space of all operators implies that $(d\exp_e)_{tu_0}$ is also Fredholm for $u_0\in H^s$ sufficiently close to $\tilde{u}_0$. We omit the details, which are very similar to those of \cite{MP} due to the fact that
\eqref{Adstar} is so similar to the corresponding operator for the Camassa-Holm equation.

\end{document}